\newcommand{\cP}{\ensuremath{\mathcal P}}
\newcommand{\cS}{\ensuremath{\mathcal S}}
\newcommand{\eps}{\varepsilon}
\renewcommand{\phi}{\varphi}
\renewcommand{\rho}{\varrho}
\let\setminus=\smallsetminus
\let\emptyset=\varnothing
\declaretheorem[parent=section]{theorem}
\declaretheorem[sibling=theorem]{lemma}
\declaretheorem[sibling=theorem,style=definition]{definition}
\setlist{itemsep=0.1em, topsep=0.1em, parsep=0.1em, partopsep=0.1em}
\colorlet{RoyalRed}{red!70!black}
\definecolor{RoyalBlue}{rgb}{0.25, 0.41, 0.88}
\definecolor{RoyalAzure}{rgb}{0.0, 0.22, 0.66}
\newlength{\bibitemsep}\setlength{\bibitemsep}{0.5pt}
\newlength{\bibparskip}\setlength{\bibparskip}{0.5pt}
\let\oldthebibliography\thebibliography
\renewcommand\thebibliography[1]{%
  \oldthebibliography{#1}%
  \setlength{\parskip}{\bibitemsep}%
  \setlength{\itemsep}{\bibparskip}%
}
\title{Routing permutations on spectral expanders via matchings}
\author{
Rajko Nenadov\thanks{Google Z\"urich. Email: \texttt{rajkon@gmail.com}.}
}
\date{}
\begin{document}
\maketitle

\begin{abstract}
We consider the following matching-based routing problem. Initially, each vertex $v$ of a connected graph $G$ is occupied by a pebble which has a unique destination $\pi(v)$. In each round the pebbles across the edges of a selected matching in $G$ are swapped, and the goal is to route each pebble to its destination vertex in as few rounds as possible. We show that if $G$ is a sufficiently strong $d$-regular spectral expander then any permutation $\pi$ can be achieved in $O(\log n)$ rounds. This is optimal for constant $d$ and resolves a problem of Alon, Chung, and Graham [SIAM J. Discrete Math., 7 (1994), pp. 516--530].
\end{abstract}

\section{Introduction} \label{sec:introduction}

The following routing problem was introduced by Alon, Chung, and Graham \cite{alon94routing}. Given a graph $G$, initially each vertex $v$ is occupied by a pebble $p_v$ (both vertices and pebbles are labelled). For each pebble $p_v$ we are also given its unique destination $\pi(v)$, that is, $\pi$ is a permutation on $V(G)$. One round of routing consists of selecting a matching in $G$ and swapping pebbles along each edge of such a matching. For example, if a pebble $p$ currently sits on a vertex $v$, a pebble $p'$ on a vertex $b$, and the edge $\{a, b\}$ is part of the selected matching, then we move the pebble $p$ to the vertex $b$ and the pebble $p'$ to the vertex $a$. We denote by $\textrm{rt}(G, \pi)$ the smallest number of rounds needed to achieve a permutation $\pi$, and $\mathrm{rt}(G) = \max_{\pi} \mathrm{rt}(G, \pi)$. Classes of graph for which $\mathrm{rt}(G)$ has been studied include complete (bipartite) graphs, paths, cycles, trees,  hypercubes, and expanders \cite{alon94routing,paul20routing,wei10routing,zhang99tree}. 

In this note we are interested in the case where $G$ is a \emph{spectral} expander. We say that a graph $G$ is an \emph{$(n, d, \lambda)$-graph} if it is a $d$-regular graph with $n$ vertices and $\lambda(G) \le \lambda$, where $\lambda(G)$ denotes the second largest \emph{absolute} eigenvalue of its adjacency matrix. A fundamental result in spectral graph theory, the \emph{Expander Mixing Lemma} (e.g.\ see \cite[Lemma 2.5]{hoory06expander}), demonstrates the importance of $\lambda(G)$: For any $S, T \subseteq V(G)$ we have
\begin{equation} \label{eq:mixing_lemma}
    \left| e_G(S, T) - |S||T| d / n \right| \le \lambda(G) \sqrt{|S||T|},
\end{equation}
where $e_G(S, T)$ counts the number of edges with one endpoint in $S$ and the other in $T$ (in the case $S \cap T \neq \emptyset$, every edge with both endpoints in $S \cap T$ is counted twice). The largest eigenvalue of a $d$-regular graph $G$ is always $d$, thus $\lambda(G) \le d$, and the celebrated Alon-Boppana bound (see \cite[Theorem 2.7]{hoory06expander}) states $\lambda(G) \ge 2\sqrt{d - 1} - o_n(1)$. Note that already for $\lambda(G) = o(d)$, the inequality \eqref{eq:mixing_lemma} implies strong edge and vertex-expansion properties of the graph $G$. For a thorough introduction to $(n, d, \lambda)$-graphs, expander graphs, and pseudo-random graphs in general, we refer the reader to \cite{hoory06expander,krivelevich06pseudorandom}.

It was shown in \cite{alon94routing} that if $G$ is an \emph{$(n,d,\lambda)$-graph}, for any $\lambda < d$, then
$$
    \mathrm{rt}(G) = O\left( \frac{d^2}{(d - \lambda)^2} \log^2 n \right).
$$
We improve this bound under a mild assumption on $\lambda$.

\begin{theorem} \label{thm:routing}
There exists $d_0, C \in \mathbb{N}$ such that if $G$ is an $(n, d, \lambda)$-graph with $d \in [d_0, n-1]$ and $\lambda < d/72$, then 
$$
    \textrm{rt}(G) \le \frac{C \log n}{\log(d / \lambda)}.
$$
\end{theorem}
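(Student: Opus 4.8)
The plan is to realise an arbitrary permutation $\pi$ by a hierarchical ``divide and route'' scheme in which each phase drives every pebble one level deeper into a fixed tree of nested blocks, arguing that each phase costs only $O(1)$ rounds while the tree has depth $O(\log n / \log(d/\lambda))$. Concretely, I would fix in advance a laminar family of successively finer partitions $\cP_0, \dots, \cP_T$ of $V(G)$ (a tree of blocks with branching factor $K$ equal to a fixed power of $d/\lambda$ and depth $T = O(\log n / \log(d/\lambda))$), where each block of $\cP_t$ is split into $K$ roughly equal child-blocks in $\cP_{t+1}$, down to singletons. The invariant maintained after phase $t$ is that every pebble $p_v$ lies in the unique level-$t$ block containing its destination $\pi(v)$. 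Phase $t+1$ must then relocate, simultaneously inside every current block $B$, each pebble to its correct child-block of $B$; since the multiset of destinations is exactly $\pi$, the demand is perfectly balanced, each child-block being the target of exactly as many pebbles as its size.

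The crucial point is that all of this routing happens inside the full graph $G$, so the strong global expansion (ratio $d/\lambda$) is available at every phase, irrespective of how small the blocks have become; this is what lets the recursion avoid the degradation one would suffer by routing inside induced subgraphs $G[B]$, whose effective degree $d|B|/n$ shrinks. For a single phase I would realise the balanced demand by a system of vertex-disjoint paths in $G$, each of length $O(1)$, taking each pebble from its current vertex to a free slot in its target child-block, and then execute this system as $O(1)$ matching rounds, using the standard realisation of a shift along a length-$\ell$ path as a product of $O(\ell)$ matchings, applied in parallel over the disjoint paths. The base $d/\lambda$ of the logarithm appears precisely here: in $\ell$ steps from a vertex one reaches a set of size about $(d/\lambda)^{2\ell}$ by \eqref{eq:mixing_lemma}, so with $\ell = O(1)$ each pebble can reach $\mathrm{poly}(d/\lambda)$ distinct candidate slots, which is exactly the branching factor $K$ that a single $O(1)$-round phase can resolve.

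The heart of the argument, and the step I expect to be the main obstacle, is the \emph{distribution lemma}: for the balanced demand above there exists a system of vertex-disjoint paths of length $O(1)$ in $G$ routing every pebble into its target child-block. I would prove this by a (defect version of a) Hall-type argument applied to the bipartite ``reachability'' relation between pebbles and slots, in which a pebble $p$ is joined to a slot $s$ whenever $\mathrm{dist}_G(p, s) \le \ell$; the mixing lemma \eqref{eq:mixing_lemma}, with the hypothesis $\lambda < d/72$ supplying the numerical slack, should guarantee that every set of pebbles has a reachability-neighbourhood large enough to satisfy Hall's condition, after which an integral-flow (Menger) argument upgrades the abstract matching into genuinely vertex-disjoint paths of bounded length. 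The two delicate points to control are (i) that the paths can be taken simultaneously vertex-disjoint \emph{across all blocks at once}, which I would enforce by phrasing the whole phase as a single unit-capacity flow problem in $G$ rather than one routing per block, and (ii) that short disjoint paths suffice \emph{uniformly}, including for the few pebbles whose correct child-block happens to lie ``far'' from their current position; here the balancedness of the demand must be combined carefully with the vertex-expansion estimate extracted from \eqref{eq:mixing_lemma} so as to keep both $\ell$ and the number of matchings per phase bounded by an absolute constant, independent of $n$, $d$, and $\lambda$. Granting the distribution lemma, composing the $T = O(\log n/\log(d/\lambda))$ phases at $O(1)$ rounds each yields the claimed bound.
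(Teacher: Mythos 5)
There is a genuine gap, and it sits exactly where you predicted: the ``distribution lemma'' is not just the main obstacle, it is false (or, in its repairable forms, unproved and essentially as hard as the theorem). First, a counting problem: in a generic phase essentially \emph{every} pebble must change child-block (for a random $\pi$, a $1-1/K$ fraction of pebbles changes child-block at every level), so as literally stated you are asking for $\Theta(n)$ vertex-disjoint paths, one per pebble, each of length at least $1$; these would require at least $2n$ vertices. The only way out is to let one path serve many pebbles via intra-path sorting, but since the paths are vertex-disjoint, a pebble can only be routed to a slot on \emph{its own} path; hence every cycle of the phase permutation must be confined to a single path of size $O(1)$. The phase permutation has arbitrary cycle structure (you never perform the reduction, used in the paper and in \cite{alon94routing}, of writing $\pi$ as a product of two involutions), so this forces you to \emph{choose} the slot assignment and simultaneously partition each block into connected, constant-size, demand-balanced pieces --- a statement you neither formulate nor prove, and which is not a Hall-type statement. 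Second, the proof tools you name do not deliver vertex-disjointness: Hall's condition on the reachability graph ($p\sim s$ iff $\mathrm{dist}_G(p,s)\le\ell$) produces an abstract pebble--slot matching, but upgrading it to vertex-disjoint paths \emph{of bounded length} is a length-constrained disjoint-paths problem, for which Menger/max-flow duality is known to fail; the reachability relation is blind to congestion, i.e.\ to the fact that many pairs may all have to route through the same small neighbourhood, and with constant $d$ a ball of radius $\ell=O(1)$ contains only $O(d^\ell)=O(1)$ vertices, so only $O(1)$ disjoint paths can pass near it.

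Your point (ii) is also fatal as set up, not merely delicate. The laminar family is fixed in advance while $\pi$ is adversarial, so in phase $t+1$ a pebble must reach a \emph{fixed} set of size $n/K^{t+1}$; from a worst-case position inside its parent block this requires roughly $(t+1)\log K/(2\log(d/\lambda))$ steps, i.e.\ $\Omega(t)$ rounds per phase, and summing gives $\Omega\bigl((\log n/\log(d/\lambda))^2\bigr)$ --- the old bound of \cite{alon94routing}, not the theorem. To make every phase cost $O(1)$ you would need each child block to dominate its parent within radius $O(1)$; but such a child inherits the parent's diameter up to $O(1)$, the requirement propagates down the hierarchy, and the final levels (which must consist of geometrically tight, eventually singleton, blocks) cannot be compatible with it. Balancedness of the demand is a counting property and cannot repair a geometric obstruction. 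It is instructive to compare with the paper, which uses the \emph{transposed} accounting: $O(1)$ batches of at most $\eps n$ pairs each (after reducing to involutions and splitting $V(G)$ via the Local Lemma), where each batch is routed along paths of length $\Theta(\log n/\log(d/\lambda))$ --- long, not constant --- connecting $v$ to $\pi(v)$ directly. Crucially these paths are \emph{not} vertex-disjoint; they are only ``$k$-matching-switchable'' (for each $z$, the $z$-th edges from either end form a matching), which is exactly what iterating the nonblocking lemma of Feldman, Friedman, and Pippenger \cite{feldman88nonblocking} across $k$ alternating bipartite graphs can guarantee. Relaxing vertex-disjointness is the whole point: in the constant-degree regime that resolves the Alon--Chung--Graham problem, $\Theta(n)$ pairs at typical distance $\Theta(\log n)$ simply cannot be linked by vertex-disjoint paths, since that would require $\Theta(n\log n)$ vertices.
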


The special case of Theorem \ref{thm:routing} where  $d \ge n^{\alpha}$ and $\lambda =  K \sqrt{d}$, for some constants $\alpha, K > 0$ (that is, $G$ is a very dense graph with, asymptotically, the strongest possible spectral expansion properties) was recently obtained by Horn and Purcilly \cite{paul20routing}. In the case where $d$ is a constant, Theorem \ref{thm:routing} gives $\mathrm{rt}(G) = O(\log n)$ which answers a problem raised by Alon, Chung, and Graham \cite{alon94routing}. In general, Theorem \ref{thm:routing} is easily seen to be optimal for $\lambda < d^{1 - \eps}$, for any constant $\eps > 0$, as in this case the obtained bound $\mathrm{rt}(G) = O(\log_d(n))$ asymptotically matches the diameter of $G$, which is clearly a lower bound on $\mathrm{rt}(G)$.

Throughout the proof we omit the use of floors and ceilings. Constants can be adjusted such that all inequalities hold with a sufficient margin to compensate for this.

\section{Nonblocking generalised matchings}

In this section we state some notions and results from the theory of \emph{wide-sense nonblocking networks}, the main machinery underlying our proof of Theorem \ref{thm:routing}.

\begin{definition} \label{def:nonblocking}
Given $d, t \in \mathbb{N}$, we say that a bipartite graph $G = (A \cup B, E)$ is \emph{$(d, t)$-nonblocking} if there exists a family $\cS$ of  subsets of $E$, called the \emph{safe} states, such that the following holds:
\begin{enumerate}[(P1)]
    \item $\emptyset \in \cS$,
    \item if $E'' \subseteq E'$ and $E' \in \cS$ then $E'' \in \cS$, and
    \item \label{prop:c} given $E' \in \cS$ of size $|E'| < t$ and a vertex $v \in A$ with $\deg_{E'}(v) < d$ (that is, $v$ is incident to less than $d$ edges in $E'$), there exists an edge $e = (v, w) \in E \setminus E'$ such that $E' \cup \{e\} \in \cS$ and $w$ is not incident to any edge in $E'$.
\end{enumerate}
\end{definition}

The following result is due to Feldman, Friedman, and Pippenger \cite[Proposition 1]{feldman88nonblocking}. It is proven in the same way as the more known result, at least within the combinatorics community, of Friedman and Pippenger \cite{friedman87expanding} on embeddings of trees in expanders.

\begin{lemma} \label{lemma:nonblocking}
Let $G = (A \cup B, E)$ be a bipartite graph and $a, d \in \mathbb{N}$. If for every $X \subseteq A$ of size $1 \le |X| \le 2a$ there are at least $2d|X|$ vertices in $B$ adjacent to some vertex in $X$, then $G$ is $(d, da)$-nonblocking.
\end{lemma}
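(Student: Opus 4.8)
The plan is to follow the Friedman--Pippenger method and to build the family $\cS$ of safe states directly from an expansion invariant. Write $N(X) \subseteq B$ for the set of vertices of $B$ adjacent to some vertex of $X \subseteq A$, and for $E' \subseteq E$ let $U(E')$ denote the set of endpoints in $B$ of the edges of $E'$. I would declare $E'$ to be \emph{safe} if every vertex of $B$ is incident to at most one edge of $E'$ (so that $|U(E')| = |E'|$) and, for every $X \subseteq A$ with $1 \le |X| \le 2a$,
\[
  |N(X) \setminus U(E')| \;\ge\; 2d|X| - \sum_{u \in X} \deg_{E'}(u).
\]
Since the right-hand side is at least $2d|X| - d|X| = d|X| > 0$, such a state can never exhaust the free neighbours of a small set, which is exactly what will make extension possible.

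Properties (P1) and (P2) are then routine. For (P1) the state $E' = \emptyset$ turns the displayed inequality into $|N(X)| \ge 2d|X|$, which is precisely the hypothesis. For (P2) I would delete edges one at a time: removing an edge $(u,w)$ frees the vertex $w$, which had degree exactly one, so for every $X$ containing $u$ both sides increase by one, while for every $X$ avoiding $u$ the right-hand side is unchanged and the left-hand side cannot decrease. The invariant therefore survives each deletion.

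The content of the lemma is (P3). If $E'$ is safe with $|E'| < da$ and $\deg_{E'}(v) < d$, then applying the invariant to $X = \{v\}$ gives $|N(v) \setminus U(E')| \ge 2d - \deg_{E'}(v) \ge d+1$, so fresh neighbours of $v$ certainly exist; the real task is to choose one whose addition keeps the invariant for \emph{all} sets simultaneously. Adding an edge $(v,w)$ with $w$ fresh lowers the left-hand side by one exactly for those $X$ with $w \in N(X)$, and lowers the right-hand side by one exactly when $v \in X$. Hence the only sets that can fail are the \emph{tight} ones (those meeting the invariant with equality) that avoid $v$ and have $w \in N(X)$, so a fresh $w$ is a legal choice precisely when it is non-adjacent to every tight set avoiding $v$.

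To produce such a $w$ I would exploit that $X \mapsto |N(X) \setminus U(E')|$ is submodular while $X \mapsto 2d|X| - \sum_{u\in X}\deg_{E'}(u)$ is modular: a short inclusion--exclusion argument shows that the tight sets avoiding $v$ are closed under union as long as the union stays inside the window $|X| \le 2a$, which is exactly where the factor $2$ in the hypothesis is spent. This yields a single maximal bottleneck set $X^*$ avoiding $v$ that contains every relevant tight set while keeping $X^* \cup \{v\}$ inside the window, and comparing the invariant on $X^* \cup \{v\}$ with the tightness of $X^*$ gives
\[
  |N(X^* \cup \{v\}) \setminus U(E')| - |N(X^*) \setminus U(E')| \;\ge\; 2d - \deg_{E'}(v) \;\ge\; d+1,
\]
where every vertex counted on the left lies in $N(v)$ but outside $N(X^*)$; any one of them is a legal choice, so (P3) follows. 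I expect this to be the main obstacle: the danger is that committing to an endpoint silently destroys the expansion of some tight set far from $v$, and the resolution — isolating the unique maximal bottleneck set via submodularity and spending the factor-$2$ slack to keep its union with $\{v\}$ inside the range where expansion is guaranteed — together with the bookkeeping on the sizes of the tight sets involved, is the delicate heart of the Friedman--Pippenger argument.
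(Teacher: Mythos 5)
Your overall plan (a Hall-type deficiency invariant, tight sets, submodularity to merge them, and a contradiction at $X^*\cup\{v\}$) is exactly the Feldman--Friedman--Pippenger argument that the paper invokes by citation, but your invariant is too strong, and this breaks the proof precisely at the step you yourself flag as the delicate one. With safe states defined by $|N(X)\setminus U(E')|\ge 2d|X|-\sum_{u\in X}\deg_{E'}(u)$, a tight set need not be small: any set $X$ of vertices untouched by $E'$ with $|N(X)|=2d|X|$ exactly and $N(X)\cap U(E')=\emptyset$ is tight, and nothing prevents $|X|=2a$. Hence the union of the tight sets avoiding $v$ can leave the window $|X|\le 2a$, and then you can neither conclude it is tight (your submodularity step needs the invariant, hence the window, for the union) nor apply the invariant to $X^*\cup\{v\}$; the claim that ``the factor $2$ in the hypothesis is spent'' to keep unions in the window is not substantiated by anything in your argument. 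The failure is not merely a gap in bookkeeping: your family $\mathcal{S}$ genuinely violates (P3). Take $A=X_0\cup\{v\}$ with $|X_0|=2a$, every vertex of $X_0$ adjacent to all of a set $N(X_0)$ of size exactly $4da$, and $N(v)\subseteq N(X_0)$ with $|N(v)|=2d$. The expansion hypothesis holds (it is never applied to the set $X_0\cup\{v\}$, whose size exceeds $2a$), yet at $E'=\emptyset$ the set $X_0$ is tight and avoids $v$, so adding any edge $(v,w)$ destroys your invariant at $X_0$: no legal $w$ exists.

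The repair, which is how the cited Friedman--Pippenger-type proof actually runs, is to maintain an invariant strictly \emph{weaker} than the hypothesis:
\[
  |N(X)\setminus U(E')| \;\ge\; d|X|-\sum_{u\in X}\deg_{E'}(u), \qquad 1\le |X|\le 2a .
\]
Your verifications of (P1), (P2) and your case analysis for adding $(v,w)$ go through verbatim. What is gained is that tightness now forces smallness: if $X$ is tight then, by the hypothesis and $|U(E')|=|E'|<da$,
\[
  d|X|-\sum_{u\in X}\deg_{E'}(u) \;=\; |N(X)\setminus U(E')| \;\ge\; |N(X)|-|E'| \;>\; 2d|X|-da,
\]
so $d|X|+\sum_{u\in X}\deg_{E'}(u)<da$ and every tight set has size less than $a$. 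Then the union of two tight sets has size below $2a$, so your submodularity argument applies and shows it is tight (hence again of size below $a$); the union $X^*$ of all tight sets avoiding $v$ therefore has size below $a$, and the invariant applied to $X^*\cup\{v\}$ (size at most $a\le 2a$) gives
\[
  |N(X^*\cup\{v\})\setminus U(E')| \;\ge\; |N(X^*)\setminus U(E')| + d - \deg_{E'}(v) \;>\; |N(X^*)\setminus U(E')|,
\]
producing a fresh neighbour of $v$ outside $N(X^*)$, which is then a legal choice. This factor-$2$ gap between the assumed expansion ($2d$) and the maintained expansion ($d$, minus used degrees) is where the hypothesis' factor $2$ is really spent --- not on keeping unions of your (too strongly constrained) tight sets inside the window.
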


The following lemma verifies the condition of Lemma \ref{lemma:nonblocking} for bipartite subgraphs of spectral expanders.

\begin{lemma} \label{lemma:spectral_nonblocking}
Let $G$ be an $(n, d, \lambda)$-graph with $\lambda < d / 72$, and let $V(G) = A \cup B$ be a partition of the vertex set. If every vertex $v \in A$ has at least $d/3$ neighbours in $B$, then the bipartite graph $G[A, B]$ is $(d / \lambda, n/24)$-nonblocking.
\end{lemma}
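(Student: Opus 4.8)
The plan is to apply Lemma~\ref{lemma:nonblocking} to the bipartite graph $G[A,B]$ with parameters $d' := d/\lambda$ (playing the role of ``$d$'' in the lemma) and $a := (n/24)/d' = n\lambda/(24d)$ (so that $d'a = n/24$ matches the claimed threshold $t$). Thus it suffices to verify the neighbourhood-expansion hypothesis: for every $X \subseteq A$ with $1 \le |X| \le 2a = n\lambda/(12d)$, the number of vertices in $B$ with a neighbour in $X$ is at least $2d'|X| = (2d/\lambda)|X|$. I would denote this neighbourhood by $N := N_{G[A,B]}(X) \subseteq B$ and aim to lower-bound $|N|$.

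The key step is an application of the Expander Mixing Lemma~\eqref{eq:mixing_lemma} in the contrapositive, exploiting the hypothesis that every vertex of $A$ sends at least $d/3$ edges into $B$. First I would observe that, since each $v \in X$ has at least $d/3$ neighbours in $B$, and all of those neighbours lie in $N$ by definition, we have $e_G(X, N) \ge (d/3)|X|$ --- every edge from $X$ into $B$ lands in $N$. On the other hand, \eqref{eq:mixing_lemma} applied to the pair $(X, N)$ gives the upper bound
\begin{equation*}
    e_G(X, N) \le \frac{|X||N|d}{n} + \lambda\sqrt{|X||N|}.
\end{equation*}
Combining these two inequalities yields $(d/3)|X| \le |X||N|d/n + \lambda\sqrt{|X||N|}$, a relation between $|X|$ and $|N|$ that I would then rearrange to extract the desired lower bound on $|N|$.

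To finish, I would argue by contradiction: suppose $|N| < (2d/\lambda)|X|$. The strategy is to show the two terms on the right-hand side of the mixing bound are each too small to sustain $e_G(X,N) \ge (d/3)|X|$. For the first term, note that $|N| < (2d/\lambda)|X| \le (2d/\lambda)\cdot 2a = (2d/\lambda)\cdot n\lambda/(12d) = n/6$, so $|X||N|d/n < |X|d/6$. For the second term, using $|N| < (2d/\lambda)|X|$ gives $\lambda\sqrt{|X||N|} < \lambda\sqrt{(2d/\lambda)}\,|X| = \sqrt{2d\lambda}\,|X|$; since $\lambda < d/72$ we have $\sqrt{2d\lambda} < \sqrt{2d^2/72} = d/6$, so this term is below $(d/6)|X|$. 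Adding the two bounds gives $e_G(X,N) < (d/6)|X| + (d/6)|X| = (d/3)|X|$, contradicting the lower bound $e_G(X,N) \ge (d/3)|X|$. Hence $|N| \ge (2d/\lambda)|X|$, which is exactly the hypothesis of Lemma~\ref{lemma:nonblocking}, and the conclusion that $G[A,B]$ is $(d/\lambda, n/24)$-nonblocking follows.

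The main obstacle I anticipate is bookkeeping the constants so that both error terms come in safely under $(d/6)|X|$ with the margin promised by $\lambda < d/72$; the factor $72$ is presumably tuned precisely so that $\sqrt{2d\lambda} < d/6$ and the size bound $|N| < n/6$ interact correctly, and one must be careful that the range $1 \le |X| \le 2a$ is used only through $|X||N| \le$ the stated quantities rather than requiring any lower-order terms. A secondary subtlety is confirming that the parameter matching $t = d'a = n/24$ is consistent and that $2a$ is the correct upper end of the range fed into Lemma~\ref{lemma:nonblocking}.
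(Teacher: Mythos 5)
Your proof is correct and follows essentially the same route as the paper: both verify the hypothesis of Lemma~\ref{lemma:nonblocking} with $a = \lambda n/(24d)$ by contradiction, pitting the lower bound $e_G(X,N) \ge (d/3)|X|$ against the Expander Mixing Lemma upper bound, and splitting the right-hand side into two terms each below $(d/6)|X|$. The only cosmetic difference is that you bound $|N| < n/6$ before substituting into the first term, whereas the paper substitutes $|N| < (2d/\lambda)|X|$ first and then uses $|X| \le \lambda n/(12d)$; the two computations are identical.
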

\begin{proof}
    We verify the conditions of Lemma \ref{lemma:nonblocking} with $a = \lambda n / (24 d)$. Consider some $X \subseteq A$ of size $|X| \le 2a \le \lambda n / (12 d)$. Suppose, towards a contradiction, that the set $Y \subseteq B$ of vertices adjacent to some vertex in $X$ is of size $|Y| < 2  (d / \lambda) |X|$. On the one hand, by the assumption of the lemma we have
    $$
        e(X, Y) \ge |X| d / 3.
    $$
    On the other hand, from \eqref{eq:mixing_lemma} together with the upper bound on $|Y|$, $|X|$ and $\lambda$, we have
    $$
        e(X, Y) \le d|X||Y| / n + \lambda \sqrt{|X||Y|} < 2 d^2 |X|^2 / (\lambda n) + |X| \sqrt{2 \lambda d} \le |X|d/3,
    $$
    thus we reach a contradiction.
\end{proof}

\section{Proof of Theorem \ref{thm:routing}} \label{sec:proof}

\begin{definition}
Given $k \in \mathbb{N}$, we say that a family of paths $\mathcal{P}$ in a graph $G$ is \emph{$k$-matching-switchable} if the following holds:
\begin{itemize}
    \item Each path $P \in \mathcal{P}$ is of the same odd length $\ell  = 2k+1$ (that is, each path has $\ell$ edges),
    \item All endpoints are distinct, and
    \item For each $z \in \{1, \ldots, k\}$, the set of edges $E_z(\mathcal{P})$, consisting of the $z$-th and $(\ell + 1 - z)$-th edge in each path in $\mathcal{P}$, forms a matching.
\end{itemize}
\end{definition}

The main property of a $k$-matching-switchable family $\mathcal{P}$, already used by Alon, Chung, and Graham \cite{alon94routing}, is that if we perform $2k+1$ rounds of routing where in round $z \le k$ we swap pebbles across $E_z(\mathcal{P})$, in round $k+1$ across edges corresponding to the  $(k+1)$-st edge from each path in $\mathcal{P}$ (note that this is indeed a matching), and in round $z > k + 1$ across $E_{2k + 2 - z}(\cP)$, then in the end all the pebbles other than the endpoints of paths in $\mathcal{P}$ remain where they were before, and the pebbles corresponding to endpoints of each path $P \in \mathcal{P}$ are swapped.

\begin{proof}[Proof of Theorem \ref{thm:routing}]
It is implicit in \cite[Theorem 2]{alon94routing} that every permutation can be represented as a composition of two permutations of order $2$. Thus it suffices to prove Theorem \ref{thm:routing} assuming $\pi$ is of order $2$, that is, $\pi^2$ is the identity permutation.

Using the probabilistic method, we first show that there exists a partition $V(G) = V_1 \cup V_2$ such that, for each vertex $v \in V(G)$, we have:
\begin{enumerate}[(i)]
    \item $v, \pi(v) \in V_i$ for some $i \in \{1,2\}$, and
    \item \label{prop:deg} $v$ has at least $d/3$ neighbours in both $V_1$ and $V_2$.
\end{enumerate}
For each cycle in $\pi$ (which is of length either $1$ or $2$ by the assumption on the order of $\pi$) toss an independent (fair) coin to decide whether to put the vertices of the cycle into $V_1$ or $V_2$. By the Lov\'asz Local Lemma (see \cite[Corollary 5.1.2]{probmethod}) and Chernoff-Hoeffding inequalities, the two properties are satisfied for each vertex simultaneously with positive probability, implying that a desired partition exists. Details are rather straightforward, thus we omit them.

Let $D = d / \lambda$. Suppose we are given a subset $W \subseteq V_i$ of size $|W| \le \eps n$, for $\eps = 1 / 72$, such that if $v \in W$ and $\pi(v) \neq v$ then $\pi(v) \not \in W$. We show that there exists a family of \emph{$k$-matching-routable} paths $\mathcal{P}_W = \{P_v\}_{v \in W}$, for $k = \log_D(n)$, where each $P_v \in \cP_W$ connects $v$ and $\pi(v)$. As observed earlier, these paths define a routing scheme which swaps the endpoints of paths in $\cP_W$ while leaving everything else intact. By greedily taking a new set $W \subseteq V_i$ ($|W| \le \eps n$) of vertices which are not yet routed and using the corresponding family $\mathcal{P}_W$ to swap the endpoints, every pebble reaches its destination after at most $\ell \cdot 2\lceil 1/\eps \rceil$ rounds, giving the desired bound.

Without loss of generality, suppose $W \subseteq V_1$. Consider $k = \log_D(n)$ bipartite graphs $G_z = G[A_z, B_z]$, for $z \in \{1, \ldots, k\}$, where $A_z = V_1$ if $z$ is odd and $A_z = V_2$ otherwise, and $B_z = V(G) \setminus A_z$. By Lemma \ref{lemma:spectral_nonblocking}, each $G_z$ is $(D, n/24)$-nonblocking. Let $(w_1, \ldots, w_t)$ be an arbitrary ordering of the vertices in $W$. We show, by induction on $i \in \{0, \ldots, t\}$, that there exists a family of $k$-matching-switchable paths $\mathcal{P}_i = (P_{w_j})_{j \le i}$, with each $P_{w_j}$ connecting $w_j$ to $\pi(w_j)$, such that $E_z(\cP_i)$ is a safe state in $G_z$ for each $z \in \{1, \ldots, k\}$. Note that the statement vacuously holds for $i = 0$. Suppose it holds for some $i < t$. We show that it then also holds for $i+1$ using the following procedure to find a new path $P_{w_{i+1}}$. Let $S_0 = \{w_{i+1}\}$ and $S'_0 = \{\pi(w_{i+1})\}$, and set $s_0 = 1$. For $z = 1, \ldots, k$, iteratively, let $M_z$ and $M_z'$ be the sets of $s_z := \min\{\eps n, s_{z-1} D\}$ edges in $G_z$ incident to $S_{z-1}$ and $S_{z-1}'$, respectively, such that each vertex in $B_z$ is incident to at most one edge in $E_z(\cP_i) \cup M_z \cup M_z'$  and $E_z(\cP_i) \cup M_z \cup M_z'$ is a safe state in $G_z$. This can be done by successively applying \ref{prop:c} (Definition \ref{def:nonblocking}) with $v \in \hat S_{z-1} = S_{z-1} \cup S_{z-1}'$, such that we ask for at most $D$ edges incident to each such vertex and in total for at most $s_z$ edges incident to each set $S_{z-1}$ and $S_{z-1}'$. Property \ref{prop:c} can indeed be applied as $|E_z(\cP_i)| < |W| \le \eps n$ and we further ask for at most $2 \eps n$ edges, thus we are always in a safe state with less than $3 \eps n \le n / 24$ edges (recall that $G_z$ is $(D, n/24)$-nonblocking). Finally, let $S_z$ ($S_z'$) be the endpoints of $M_z$ ($M_z'$) in $B_z$ (for the next iteration, recall that $B_z = A_{z+1}$). After all $k$ iterations are done, the choice of $k$ implies $|S_k| = |S_k'| = \eps n$, thus by \eqref{eq:mixing_lemma} there exists an edge $e$ between some $v_k \in S_k$ and $v_k' \in S_k'$. Now going backwards with $z = k, \ldots, 1$, let $m_z \in M_z$ be the unique edge incident to $v_z$, and $m_z' \in M_z'$ the unique edge incident to $v_z'$. Set $v_{z-1}$ ($v_{z-1}'$) to be the other endpoint of $m_z$ ($m_z'$), and proceed to the next iteration. The edges $m_1, m_2, \ldots, m_k, e, m_{k}', m_{k-1}', \ldots, m_1'$ then define a path $P_{w_{i+1}}$ such that $\cP_{i+1}$ satisfies the inductive hypothesis. This finishes the proof.
\end{proof}

\section{Concluding remarks}

The idea of using nonblocking properties of expanders to reach a large set of vertices, connect two of them, and then remove all unused ones is usually attributed to Daniel Johannsen \cite{johannsen} and is a  standard technique today. For other recent applications of this idea, see \cite{draganic22rolling,glebov,letzter2021size,montgomery19core,montgomery19tree}. The main difference between these applications and the presented one is that we do not aim to find paths which are entirely vertex-disjoint but rather only `locally', that is, they are disjoint with respect to one step of the routing. While this makes the task easier on the one hand, it also allows for more endpoints to be dealt with simultaneously, thus making the task harder on the other hand. Consequently, unlike most of the other results which rely directly on tree embeddings of Friedman and Pippenger \cite{friedman87expanding}, the `local' vertex-disjoint property is achieved through a repeated application of a related result of Feldman, Friedman, and Pippenger \cite{feldman88nonblocking} (Lemma \ref{lemma:nonblocking} in this note), with each step of the routing corresponding to an edge in a \emph{distinct} copy of a bipartite subgraph of $G$.

The proof of Lemma \ref{lemma:nonblocking} does not provide an efficient algorithm for finding an edge guaranteed by \ref{prop:c}. However, under somewhat stronger requirements on $G$, which are satisfied in our case, Aggarwal et al.\ \cite[Theorem 2.2.7]{aggarwal96optical} gave a polynomial time algorithm for finding such an edge, which in turn translates into a polynomial time algorithm for finding a routing scheme in Theorem \ref{thm:routing}.

\newpage
The following problem remains open: Is it true that if $G$ is a $d$-regular graph with the \emph{Cheeger constant} $h(G) \ge \eps$, for some $\eps > 0$, where
$$
    h(G) = \min\left\{ e(X, V(G) \setminus X) / |X| \colon X \subseteq V(G) \text{ and } |X| \le |V(G)|/2 \right\},
$$
then $\mathrm{rt}(G) = O_{d, \eps}(\log n)$? The best known bound is  $\mathrm{rt}(G) = O((d/\eps)^4 \log^2 n)$, due to Alon, Chung, and Graham \cite{alon94routing}.

{\small \bibliographystyle{abbrv} \bibliography{routing}}

\end{document}